\newtheorem{theo}{Theorem}[section]
\newtheorem{lem}[theo]{Lemma}
\newtheorem{prop}[theo]{Proposition}
\newtheorem{cor}[theo]{Corollary}
\newtheorem{que}[theo]{Question}
\newtheorem{claim}[theo]{Claim}
\newtheorem{thmintro}{Theorem}
\newtheorem{corintro}[thmintro]{Corollary}
\providecommand{\customgenericname}{}
\newcommand{\newcustomtheorem}[2]{%
  \newenvironment{#1}[1]
  {%
   \renewcommand\customgenericname{#2}%
   \renewcommand\theinnercustomgeneric{##1}%
   \innercustomgeneric
  }
  {\endinnercustomgeneric}
}
 \theoremstyle{definition}
\newtheorem{dfn}[theo]{Definition}
\newtheorem{ex}[theo]{Example}
\newtheorem{exs}[theo]{Examples}
\newtheorem{fact}[theo]{Fact}
\newtheorem{rem}[theo]{Remark}
\newcommand{\N}{\ensuremath{\mathbb{N}}}  
\newcommand{\Z}{\ensuremath{\mathbb{Z}}}
\newcommand{\Q}{\ensuremath{\mathbb{Q}}}
\newcommand{\R}{\ensuremath{\mathbb{R}}}
\newcommand{\M}{\ensuremath{\mathcal{M}}}
\newcommand{\Rs}{\ensuremath{\mathcal{R}}}
\newcommand{\vs}{\vspace{0.2cm}}
\newcommand{\mtf}{\ensuremath{\mathcal{N}}}
\newcommand{\se}{\subseteq}
\newcommand{\inv}{^{-1}}
\newcommand{\ol}{\overline}
\newcommand{\df}{\rangle_\mathrm{def}}
\newcommand{\la}{\langle}
\newcommand{\ra}{\rangle}
\newcommand{\fg}{\widehat G} 
\DeclareMathOperator{\Aut}{Aut}
\DeclareMathOperator{\GL}{GL} 
\DeclareMathOperator{\SO}{SO}
\DeclareMathOperator{\rk}{rk_\mathrm{def}}
\renewcommand{\leq}{\leqslant}
\renewcommand{\geq}{\geqslant}
\begin{document}

\author{Annalisa Conversano}

\title[Definable rank, o-minimal groups, and Wiegold's problem]{Definable rank, o-minimal groups,  \\ and Wiegold's problem}

\address{Massey University Auckland, New Zealand} 
\email{a.conversano@massey.ac.nz}

\noindent
\date{June 24, 2026} 
 
\maketitle

%\vspace{-.5cm}
\begin{abstract} 
We show that an o-minimal structure $\M$ defines groups with infinite definable rank if and only if \M\ defines some finite power of $\Q/\Z$. If no interval of \M\ is countable, then all groups definable in \M\ have finite definable rank.

In general, we prove that every definable group $G$ in an arbitrary o-minimal structure is an extension of a definable periodic group $P$ by a (maximal unique) definably connected definably finitely generated subgroup $\fg$. When $G$ is definably connected, $P$ is abelian and the extension almost split, in that $G$ is an almost direct product 
$G = (\fg \times P)/F$, for some finite central subgroup $F$. The definable rank of $\fg$ is bounded above by its dimension, and the upper bound is strict whenever $\fg$ is not solvable.

Along the way, we show that every linear definable group has finite definable rank. This provides another proof, and a generalization to linear o-minimal groups, of the fact that linear algebraic groups over an algebraically closed field of characteristic $0$ contain a Zariski-dense finitely generated subgroup.

We further prove that every perfect definable group is normally monogenic, generalizing the finite group case. This yields a positive answer to Wiegold's problem in the o-minimal setting.

\end{abstract}

\thispagestyle{empty}

\bigskip
\section{Introduction}

Every algebraic group over an algebraically closed field $\mathcal{K} = (K, +, \cdot)$ is definable in $\mathcal{K}$. In characteristic $0$, the field $\mathcal{K}$ is definable in any maximal real closed field $\Rs$. It follows that every algebraic group $G$ over $\mathcal{K}$ is definable in the o-minimal structure $\Rs$. On the other hand, a considerable volume of work shows that definable groups in arbitrary o-minimal structures are very closely related to algebraic groups in general. For the compact case, see for instance \cite{BB12, Berarducci-Mamino, BMO10, EMPRT, HPP1, PC}. For the non-compact case, see \cite{BBO19, me-nilpotent, JC, PPSI, PPSII, PPSIII}.

In this paper we present yet again another common feature by showing, among other things, that every linear definable group in an arbitrary o-minimal structure is definably finitely generated, generalizing the well-known fact that linear algebraic groups over an algebraically closed field of characteristic $0$ contain a finitely generated Zariski-dense subgroup \cite[Proposition 1]{Tret}. 
%That is,

\begin{thmintro}\label{theo:linear}
 Let $G$ be a definable group in an o-minimal structure \M. If $G$ is definably isomorphic to a definable subgroup of $\GL_m(\Rs)$ for some positive integer $m$ and definable real closed field $\Rs$, then $G$ is definably finitely generated.  
\end{thmintro}

Let us now fix the terminology and notation. 
The rank of a group is the smallest cardinality of a generating set.
By analogy, we call the {\bf definable rank} of a definable group $G$, the smallest cardinality of a set $X$ such that $G = \la X \df$, where $\la X \df$ denotes the smallest definable subgroup of $G$ containing $X$. We will say that $X$ is a {\bf definably generating set} for $G$. In o-minimal structures, $\la X \df$ always exists because the descending chain condition for definable groups holds \cite[Proposition 2.12]{Pillay - groups}.

If $G$ has finite definable rank $n \in \N$, we say that $G$ is {\bf definably finitely generated}, and write $\rk(G) = n$. Following \cite{Strzebonski}, when  $\rk(G) \leq 1$ we say that $G$ is {\bf monogenic}.  That is, $G$
is monogenic when $G = \la g \df$, for some $g \in G$. Every monogenic definable group is abelian, and so is $\la X \df$ whenever the elements of $X$ commute pairwise. Disconnected monogenic definable groups are described in \Cref{prop:ab-conn} and \Cref{cor:monogenic}. 

%\medskip
For a definable group, a definably generating set is quite significant, as it plays the same role as a generating set of an abstract group. For instance, every definable homomorphism is uniquely determined by the value on a definably generating set.  Not  every definable group in an o-minimal structure is definably finitely generated. For instance,  in the o-minimal $\M = (\Q, <, +)$ the definable group 
$A = [0, 1[$ with addition modulo 1 is an infinite periodic group isomorphic to $\Q/\Z$, and every finitely generated subgroup is finite. Hence $A$ has infinite definable rank.

We will show that in some sense this is the \emph{only} definable group with infinite definable rank in any o-minimal structure, in that a definable group $G$ has infinite definable rank if and only if $G$ is a definable extension of a finite power of $\Q/\Z$. More precisely:

\begin{thmintro}\label{theo:def-fg}
	In every definable group $G$ there is a definably connected subgroup $\fg$ with finite definable rank containing all definably connected subgroups of $G$ with finite definable rank. The subgroup $\fg$ is definably characteristic, the quotient $G/\fg$ is a periodic group and $G^0$ is an almost direct product of $\fg$ and a subgroup isomorphic to $G^0/\fg$. Namely,
	\[
	G^0 \cong (\fg \times  (\Q/\Z)^p)/F,
	\]
	
	\medskip \noindent
	where $F$ is a finite subgroup central in $G^0$ and $p = \dim (G/\fg)$. The group $G$ is definably finitely generated if and only if $G^0 = \fg$. Moreover,    
	\[
	\rk(\fg) \leq \dim \fg \qquad \& \qquad \rk(\fg) = \dim \fg\ \Rightarrow\ \fg \mbox{ solvable.}
	\]
\end{thmintro}
 
 \begin{corintro} \label{cor:countable}
The definable rank of a definable group is at most countable.
 \end{corintro}

 It follows from \Cref{theo:def-fg} that an o-minimal structure \M\ defines groups with infinite definable rank if and only if \M\ defines a finite power of $\Q/\Z$. We will show this can happen only when \M\ has countable intervals:
 
 \begin{thmintro}\label{theo:str}
   If no interval of \M\ is countable then every $0$-group is monogenic, every definable periodic group is finite, and all groups definable in \M\ are definably finitely generated.
 \end{thmintro}

Both \Cref{theo:def-fg} and \Cref{theo:str} assume a full characterization of infinite periodic definable groups that turn out to be always finite extensions of a finite power of $\Q/\Z$ 
(\Cref{prop:periodic}). It follows from \Cref{theo:str} that in general the definable rank is not preserved in elementary extensions, and every sufficiently saturated o-minimal structure defines only groups with finite definable rank. More precisely, if $G$ is a definably connected group in \M\ with infinite definable rank and $\mathcal{N}$ is a $\aleph_1$-saturated elementary extension of \M, then 
%$\widehat {G(\mathcal{N})} = G(\mathcal{N})$ and
$G(\mathcal{N})/\fg(\mathcal{N})$ is monogenic and $G(\mathcal{N})$ has finite definable rank. This begs the question of which   model-theoretic properties on a structure, outside the o-minimal case, may ensure finite definable generation for all definable groups.

%\newpage
\medskip
Recall that a group $G$ is said to be {\bf normally generated} by $x \in G$, if $x$ is not contained in any proper normal subgroup of $G$. In general, the smallest normal subgroup of $G$ containing $x$ is called the {\bf normal closure} of $x$ and denoted by $\la x \ra^G$. When $G$ is finitely generated, $G$ is the normal closure of a single element if and only if $G$ is a homomorphic image of a knot group \cite[Theorem 1]{knot}.

Every perfect finite group is normally generated by a single element (see for instance \cite[4.2]{LW}). For infinite groups, it is a well-known open problem since the 1970s attributed to J. Wiegold (see \cite[FP14]{problems02} and \cite[5.52]{notebook}): 

\begin{que}[Wiegold] 
Suppose $G$ is a finitely generated perfect group. Is $G$ normally generated by a single element? 
\end{que}

If $G$ is not assumed to be finitely generated, infinite direct sums of perfect groups provide easy counterexamples. In a first-order setting, we want to restrict ourselves to definable subgroups.

 \begin{dfn}
 We say that a definable group $G$ is {\bf normally monogenic} when there is some $x \in G$ that is not contained in any proper normal definable subgroup of $G$. We call such an element $x$ a {\bf normal generator} of $G$ and write $\la x \df^G = G$.
 \end{dfn}

Although it is commonly believed that finitely generated perfect counterexamples do exist, a positive answer to Wiegold's question has been given in \cite{EM13} for the compactly generated locally compact groups that do not admit infinite discrete quotients. The compactly generated assumption is the necessary replacement of finite generation. In this paper we give a positive solution in the o-minimal setting, with no further assumption:

\begin{thmintro}\label{theo:Wiegold}
Every perfect definable group is normally monogenic.
\end{thmintro}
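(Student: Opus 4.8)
The plan is to reduce the statement to the definably connected case and then exploit the structure theory of definable groups in o-minimal structures together with Theorem A. First, suppose $G$ is perfect and definable. Since $G$ is perfect it is definably connected: indeed, $G/G^0$ is a finite group which is a quotient of the perfect group $G$, hence perfect, hence trivial, so $G = G^0$. Next I would invoke the structure of definably connected groups: there is a maximal normal definable solvable subgroup (the solvable radical) $R(G)$, and $G/R(G)$ is semisimple. Because $G$ is perfect, one checks that $R(G)$ is contained in a "small" part of $G$; in fact I expect one reduces, via Theorem A, to understanding normal generators modulo $R(G)$ and then lifting, using that the obstruction lives in the abelianization of something solvable and is therefore killed by perfectness.

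The second main ingredient is the semisimple case. A definably connected semisimple group $H$ decomposes (up to finite center and isogeny) as an almost-direct product of definably simple factors $H_1, \dots, H_k$; each definably simple group is, modulo a finite center, a simple algebraic group over a real closed field by the Peterzil--Pillay--Starchenko trichotomy. For a single definably simple factor, I would produce a normal generator by a commutator/conjugacy argument: pick a suitable non-central regular element $x$; any proper normal definable subgroup containing $x$ would contain all conjugates of $x$, hence (using that the conjugacy class of a regular element generates definably, e.g. by a dimension count and definable connectedness) the whole factor, contradicting properness. For the product of several simple factors one combines normal generators of the individual factors into a single element, using that a normal definable subgroup of the almost-direct product projects onto a normal subgroup in each coordinate, so an element whose coordinates are simultaneously normal generators of the factors is a normal generator of the product (here one must be slightly careful because of the amalgamation along the finite center, but finiteness makes this harmless after passing to $G^0$ again).

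The third step is assembling these into a normal generator of all of $G$. Let $\pi : G \to G/R(G) =: \bar G$ be the quotient; $\bar G$ is perfect semisimple, so by the previous paragraph it has a normal generator $\bar x$. Choose a preimage $x \in G$ with $\pi(x) = \bar x$. If $N \le G$ is a proper normal definable subgroup containing $x$, then $\pi(N)$ is a normal definable subgroup of $\bar G$ containing $\bar x$, hence $\pi(N) = \bar G$, i.e. $G = N \cdot R(G)$. Thus $G / N \cong R(G) / (R(G) \cap N)$ is a definable quotient of the solvable group $R(G)$, so $G/N$ is solvable; but $G$ is perfect, so $G/N$ is perfect and solvable, hence trivial, so $N = G$ — contradiction. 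Hence $x$ is a normal generator and $G$ is normally monogenic.

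**Main obstacle.** The delicate point is the semisimple case: showing that some single (non-central) element normally generates a definably connected semisimple group. The argument "the conjugacy class of a regular element definably generates" needs the o-minimal analogue of the algebraic-group fact, which should follow from definable connectedness plus a dimension count on the definable subgroup generated by a conjugacy class (using the descending chain condition and Theorem A to stay in the definable world), but one must handle the finite center and the gluing of the simple factors carefully, and one must make sure the chosen element projects to a genuine normal generator in \emph{every} simple factor simultaneously, which requires knowing the normal generators can be chosen from a large (generic) set in each factor rather than being isolated.
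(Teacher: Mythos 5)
There is a genuine gap, and it occurs at the very first step. You claim that a perfect definable group is automatically definably connected because ``$G/G^0$ is a finite group which is a quotient of the perfect group $G$, hence perfect, hence trivial.'' A perfect finite group need not be trivial: $A_5$ is perfect, finite, and nontrivial, and (for instance) $A_5$ itself, or $A_5 \times \SO_3(\Rs)$, is a perfect definable group that is not definably connected. So your reduction to the connected case fails, and this is not a minor omission: roughly half of the paper's proof is devoted to the disconnected case. There one uses that $G/G^0$ is a perfect \emph{finite} group, hence normally generated by one element (the known finite-group case of Wiegold's problem), and then one must splice a lift $x$ of a normal generator of $G/G^0$ together with a normal generator $\ol y$ of $G/H$ (where $H = \la x \df^G$ is chosen of minimal dimension) into a single element $g = xy$; showing that $\la g\df^G = G$ requires a genuinely nontrivial argument involving \Cref{fact:disconnected} and a minimality-of-dimension comparison. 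None of this is recoverable from your sketch.

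On the connected case your outline is essentially the paper's: reduce modulo the solvable radical $R$ (your third step, which is correct and matches the paper verbatim --- $G = NR$ forces $G/N$ solvable, contradicting perfectness), and handle the semisimple quotient via the decomposition of $G/Z(G)$ into definably simple factors. However, the ``main obstacle'' you identify is not actually an obstacle: a definably simple group has \emph{no} proper nontrivial normal definable subgroup, so every non-identity element is already a normal generator of a single factor, and no regular-element or conjugacy-class argument is needed. For the direct product $H_1 \times \dots \times H_k$ one only needs an element with a nontrivial coordinate in each factor whose normal closure meets each factor nontrivially; the paper arranges this explicitly by taking $x_i$ to be a $p_i$-torsion element for distinct primes $p_i$ (such elements exist by \cite[Theo 5.1]{PPSIII}), so that suitable powers of $(x_1,\dots,x_k)$ isolate the coordinates. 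The passage from $G/Z(G)$ back to $G$ uses that $Z(G)$ is finite, so the normal closure of a lift has full dimension and equals $G$ by definable connectedness --- your remark that ``finiteness makes this harmless'' is in the right spirit but should be made precise. In summary: fix the semisimple case is easy; the real missing content is the disconnected case, which your argument erases with a false claim.
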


The proof of \Cref{theo:Wiegold} is constructive and a normal generator is found explicitly. Two main structural ingredients are exploited: 1) the decomposition of centerless semisimple definable groups into a direct product of definably simple groups \cite[Theo 4.1]{PPSI}, each containing a definable torus \cite[Theo 5.1]{PPSIII} (for the connected case), and 2) the existence of a smallest definable subgroup of finite index \cite[Prop 2.12]{Pillay - groups}, (for the disconnected case).

\medskip
Unless otherwise stated, throughout the paper groups are definable in an arbitrary o-minimal structure \M. By {\bf definable}, we always mean ``definable with parameters in \M''. When we say that a set is {\bf definably connected} or {\bf definably compact}, we assume it is definable.
%
%% 
%%
%%
%%
%%
%% 
%%
%%
%% 
%%
%%
%% 
%%
%% 
%%
%%
%%
%% 

%=====================================
%=====================================
%=====================================
\medskip
\section{Linear groups have finite definable rank}
%\section{The definable rank}

Recall that a definable group $G$ is called {\bf linear} when there is a definable real closed field $\Rs$ such that $G$ is definably isomorphic to a definable subgroup of the general linear group 
$\GL_m(\Rs)$, for some positive integer $m$. In this section we will show that every definable linear group has finite definable rank, generalizing Proposition 1 in \cite{Tret} showing that linear algebraic groups over an algebraically closed field of characteristic $0$ contain a Zariski-dense finitely generated subgroup.

In abstract groups, the rank of a subgroup can be bigger than the rank of the group. For instance, the free group with $n+1$ generators $F_{n+1}$ embeds in the free group with $n$ generators $F_n$, and the commutator subgroup of $F_2$ has infinite rank \cite{free}.
%We will prove that in a definable group with finite definable rank every definable subgroup has finite definable rank as well. But 
Similarly, there are definable groups containing definable subgroups with bigger definable rank. An example is given below.  

\begin{ex}\label{ex:subgr}
 Let $S$ be the semialgebraic group in \cite[5.3]{Strzebonski}. That is, $S = \R \times [1, e[$ with the operation defined by
 \[
 (x, u) \ast (y, v) = 
 \begin{cases}
 (x + y, uv)  & \mbox{ if $uv < e$} \\
 (x+y+1, uv/e) & \mbox{ otherwise.}
 \end{cases}
 \]

Fix $n \in \N$, $n > 1$, and set $G$ to be the direct product of $n$ copies of $S$. The subgroup $H = (\R \times \{1\})^n$ is definably isomorphic to $(\R^n, +)$, so $\rk(H) = n$. However, $G$ is monogenic. For instance, take $\alpha_1, \dots, \alpha_n \in (0, 1)$ such that $1, \alpha_1, \dots, \alpha_n$ are $\Q$-linearly independent, and set $u_i = e^{\alpha_i}$. Then $G = \la g \df$ for $g = ((0, u_1), \dots, (0, u_n))$.
\end{ex}

Definable quotients are better behaved, instead:

\begin{lem}\label{lem:rk-dec}
Let $H$ be a normal definable subgroup of a definable group $G$. Then
\[
\rk(G/H) \leq \rk(G) \leq \rk(H) + \rk(G/H). 
\]
\end{lem}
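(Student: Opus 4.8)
The plan is to prove the two inequalities separately, in each case by transporting a definably generating set through the quotient map $\pi \colon G \to G/H$. Throughout I will use that for a definable homomorphism $f \colon G \to G'$ and a subset $X \subseteq G$, one has $f(\la X \df) = \la f(X) \df$ when $f$ is surjective: indeed $f(\la X \df)$ is a definable subgroup of $G'$ containing $f(X)$, so it contains $\la f(X) \df$; conversely $f\inv(\la f(X) \df)$ is a definable subgroup of $G$ containing $X$, hence contains $\la X \df$, giving the reverse inclusion after applying $f$. (This uses that the preimage of a definable subgroup under a definable homomorphism is a definable subgroup, which is immediate.)

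For the left inequality $\rk(G/H) \leq \rk(G)$: let $X$ be a definably generating set for $G$ with $|X| = \rk(G)$, so $\la X \df = G$. Applying the displayed identity to $\pi$ gives $\la \pi(X) \df = \pi(\la X \df) = \pi(G) = G/H$, so $\pi(X)$ definably generates $G/H$ and $\rk(G/H) \leq |\pi(X)| \leq |X| = \rk(G)$. (If $G$ is not definably finitely generated the inequality is vacuous, but by \Cref{theo:def-fg} every definable group is anyway.)

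For the right inequality $\rk(G) \leq \rk(H) + \rk(G/H)$: let $Y \subseteq H$ be a definably generating set for $H$ with $|Y| = \rk(H)$, and let $\ol Z \subseteq G/H$ be a definably generating set for $G/H$ with $|\ol Z| = \rk(G/H)$. Choose a set-theoretic lift $Z \subseteq G$ of $\ol Z$, i.e. a subset with $\pi(Z) = \ol Z$ and $|Z| = |\ol Z|$ (pick one preimage per element). Set $X = Y \cup Z$, so $|X| \leq \rk(H) + \rk(G/H)$. Let $K = \la X \df$, a definable subgroup of $G$. Then $\pi(K) = \la \pi(X) \df \supseteq \la \pi(Z) \df = \la \ol Z \df = G/H$, so $\pi(K) = G/H$, i.e. $KH = G$. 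Also $Y \subseteq K$, and $\la Y \df = H$ since $\la Y \df$ is a definable subgroup and $H$ is the \emph{smallest} definable subgroup containing $Y$; but actually I need $H \subseteq K$, which follows because $\la Y \df$ is contained in $K$ (as $K$ is a definable subgroup containing $Y$) and $\la Y \df = H$. Hence $H \subseteq K$, and combined with $KH = G$ this gives $K = KH = G$. Therefore $X$ definably generates $G$ and $\rk(G) \leq |X| \leq \rk(H) + \rk(G/H)$.

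The argument is essentially a formal diagram chase, so there is no serious obstacle; the one point requiring the (mild) hypothesis ``definable'' everywhere is the identity $f(\la X \df) = \la f(X) \df$, which hinges on preimages and images of definable subgroups under definable homomorphisms being definable subgroups — this is standard in the o-minimal setting and also ensures all the subgroups $K$, $\pi(K)$, etc.\ appearing above are genuinely definable, so that $\la \cdot \df$ makes sense. A minor care point: one must check $\la Y \df = H$ is used correctly — we only need the inclusion $H = \la Y \df \subseteq K$, which is the easy direction. Finally, one should note that all the $\rk$ values involved are finite by \Cref{theo:def-fg}, so the inequalities are between natural numbers and the additions on the right-hand side are unambiguous.
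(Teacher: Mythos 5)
Your proof is correct and follows essentially the same route as the paper: project a minimal definably generating set for the first inequality (using that the preimage of a proper definable subgroup would be a proper definable subgroup containing the generators), and for the second, adjoin a lift of a definably generating set of $G/H$ to one of $H$. You are in fact slightly more explicit than the paper at the final step, where the paper writes $G = \la H \cup Y' \df$ and leaves implicit the replacement of $H$ by a definably generating set of cardinality $\rk(H)$.
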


\begin{proof}
Let $X = \{g_i : i \in I\} \subset G$ be a set of minimal cardinality of definable generators and
$\ol X = \{\ol g_i : i \in I\} \subset G/H$ the set of their images by the canonical projection $G \to G/H$. If $\la \ol X \df$ is a proper subgroup of $G/H$, its pre-image is a proper definable subgroup of $G$ containing $X$, contradiction. Therefore  $\rk(G/H) \leq \rk(G)$.

Let $s \colon G/H \to G$ be a section of the canonical projection $G \to G/H$.  Suppose $Y \subset G/H$ is a subset of minimal cardinality such that $\la Y \df = G/H$ and set $Y' = \{s(y) \in G : y \in Y\}$. Then $G = \la H \cup Y' \df$ and 

\[
\rk(G) \leq \rk(H) + \rk(G/H),
\]

\noindent \medskip
as claimed.
\end{proof}

Every definable group $G$  contains a smallest definable subgroup of finite index $G^0$   \cite[Prop 2.12]{Pillay - groups}. $G^0$ is normal and coincides with the definably connected component of the identity. A definable group $G$ is {\bf definably connected} if and only if $G = G^0$. From \Cref{lem:rk-dec} we have:

\begin{cor}\label{cor:disc}
Let $G$ be a definable group. Then $\rk(G) \leq \rk(G^0) + |G/G^0|$. In particular, if $G^0$ has finite definable rank, so does $G$.
\end{cor}

The converse of \Cref{cor:disc} holds as well, in the sense that if $G$ has finite definable rank, so does $G^0$. In a previous version of the paper, we proved this fact by relying on further results. However, the referee rightly pointed out that this is unnecessary and suggested the following direct proof.

\begin{lem}\label{claim:gzero}
If $G$ has finite definable rank then $G^0$ has finite definable rank.	 
\end{lem} 

\begin{proof}
Let $X$ be a finite subset of $G$ such that $G = \la X \df$ and set $B = \la X \ra$. The subgroup $B \cap G^0$ has finite index in $B$, hence it is finitely generated. Let $Y$ be a finite 
set such that $\la Y \ra = B\cap G^0$ and set $H = \la Y \df$. Because $G^0$ is normal in $G$, $B \cap G^0$ is normal in $B$, $B$ normalizes $H$ and $BH$ is a subgroup of $G$. Moreover, $H$ has finite index in $BH$, so $BH$ is definable. Hence $BH = \la X \df = G$. It follows that $H$ has finite index in
$G$. Since $Y \subset G^0$, we get $G^0 = H = \la Y \df$.	 
\end{proof}

\medskip
Monogenic groups do not need to be definably connected. For instance:

\begin{ex} \label{ex:mono-disc} 
Let $G = \Z/2\Z \times \R$. For each $x \in \R$, $x \neq 0$, we can check that 
\[
G = \la (1, x) \df.
\]
 
 \noindent
Indeed, set $K = \la (1, x) \df$. Since $(1, x)^2 = (0, 2x) \in K$, $\la (0, 2x) \df = \{0\} \times \R \subset K$. In particular, $(0, x) \in K$ and $(1, x)(0, x)\inv = (1, 0) \in K$. 
\end{ex}

In fact, we can show that an abelian definable group $G$ is monogenic if and only if 
$G^0$ and $G/G^0$ are monogenic. We will use the following:

\begin{fact}\cite[Theo 1.8]{JC} \label{fact:disconnected}
For any definable group $G$ there are finite subgroups $F$ such that $G = FG^0$.
\end{fact}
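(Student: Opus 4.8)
The plan is to prove the statement by induction on $\dim G$. If $\dim G = 0$ then $G$ is finite, $G^0 = \{1\}$, and $F = G$ works. For the inductive step, assume $\dim G = n > 0$. The first reduction is: if $G^0$ admits a definable subgroup $N$ that is definably connected, normal in $G$, and satisfies $1 \leq \dim N < n$, then we win. Indeed, $(G/N)^0 = G^0/N$ has dimension $n - \dim N < n$, so by induction there is a finite $\ol F \leq G/N$ with $\ol F\cdot(G^0/N) = G/N$; its preimage $\wt F \leq G$ then satisfies $\wt F\cdot G^0 = G$, contains $N$ with $\wt F/N$ finite, hence $\dim \wt F = \dim N < n$, and applying the induction hypothesis to $\wt F$ gives a finite $F \leq \wt F$ with $F\cdot \wt F^0 = \wt F$; since $\wt F^0 \leq G^0$ and $N \leq G^0$ this yields $F\cdot G^0 = G$.

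So we may assume the only definably connected definable subgroups of $G^0$ normal in $G$ are $\{1\}$ and $G^0$ (a definable subgroup of $G^0$ of dimension $n$ must be all of $G^0$, since $G^0$ is definably connected). Apply this to $R^0$, the identity component of the solvable radical of $G^0$, which is characteristic in $G^0$ hence normal in $G$: either $R^0 = G^0$, so $G^0$ is solvable, or $R^0 = \{1\}$, so $G^0$ is semisimple. In the solvable case, note $[G^0,G^0]$ is definably connected (being generated by the definably connected set $\{[x,y]\}$ through the identity) and normal in $G$, hence $\{1\}$ or $G^0$; the latter would make $G^0$ perfect and solvable, so trivial, contradicting $n>0$; therefore $G^0$ is abelian. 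Thus $G^0$ is either abelian or semisimple.

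If $G^0$ is abelian it is divisible: for each $k$ the definable endomorphism $x\mapsto kx$ has $0$-dimensional, hence finite, kernel, so its image is a definably connected definable subgroup of full dimension, i.e.\ all of $G^0$; in particular $G^0[k]$ is finite. Put $Q = G/G^0$ and $m = |Q|$. A choice of coset representatives gives a $2$-cocycle $c\colon Q\times Q\to G^0$; since $m\cdot H^2(Q,G^0) = 0$ we have $m\,c = \delta b$ for a cochain $b$, and by divisibility we may choose a cochain $a$ with $m\,a = b$, so $m\,(\delta a - c) = 0$. Replacing the representatives by their $a$-translates, the new cocycle takes values in the finite group $G^0[m]$, and the union of the cosets of $G^0[m]$ through the new representatives is a finite subgroup $F$ with $F\cdot G^0 = G$.

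The remaining case, $G^0$ semisimple, is the main obstacle. Here $Z := Z(G^0)$ is finite; passing to $\ol G = G/Z$ gives $\ol G^0 = G^0/Z$ centerless semisimple, which by \cite[Theo 4.1]{PPSI} is a direct product of definably simple groups, each containing a definable torus by \cite[Theo 5.1]{PPSIII}. Since $\ol G^0$ is centerless, $C_{\ol G}(\ol G^0)\cap\ol G^0 = \{1\}$, so $C_{\ol G}(\ol G^0)$ embeds into the finite group $\ol G/\ol G^0$ and is finite; thus $\ol G$ is, modulo a finite piece, a subgroup of $\Aut(\ol G^0)$ containing $\Inn(\ol G^0)$. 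One then builds a finite subgroup of $\ol G$ surjecting onto $\ol G/\ol G^0$ by treating, inside $\Aut(\ol G^0) \cong \big(\prod_i\Aut(S_i)\big)\rtimes(\text{permutations of isomorphic factors})$, the factor-permutation part and the finite-order outer automorphisms of each definably simple factor $S_i$ separately — realising the latter by elements of the definable torus of $S_i$ — and then lifting the result through the finite kernel $Z$ to a finite $F \leq G$ with $G = FG^0$. Coordinating the permutation action on the factors with the outer automorphisms of the individual factors, so that the lifts actually close up to a finite subgroup, is the delicate point.
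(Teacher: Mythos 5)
This statement is quoted by the paper as a fact from \cite{JC}; no proof of it appears in the present text, so your proposal has to be judged on its own. Its overall architecture is reasonable and the first two thirds are essentially correct: the induction on dimension, the reduction to the case where $G^0$ has no proper nontrivial definably connected definable subgroup normal in $G$, the resulting abelian/semisimple dichotomy, and the abelian case (divisibility of a definably connected abelian group, finiteness of its $m$-torsion, and the cohomological argument that an extension of a finite group of order $m$ by a divisible abelian group with finite $m$-torsion contains a finite subgroup meeting every coset) all go through. Two steps do lean on nontrivial facts that you should cite rather than gesture at: the existence and definability of the solvable radical of $G^0$, and the definability and definable connectedness of $[G^0,G^0]$ for a definably connected $G^0$ (Baro--Jaligot--Otero); your parenthetical ``generated by a definably connected set through the identity'' is not, in the o-minimal setting, a proof that this subgroup is definable, let alone definably connected.

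The genuine gap is the one you flag yourself: the semisimple case is asserted, not proved, and it is the heart of the theorem. Producing a finite subgroup of $\ol G$ that surjects onto $\ol G/\ol G^0$ by decomposing $\Aut(\ol G^0)$ into inner automorphisms, graph/outer automorphisms of each definably simple factor, and permutations of isomorphic factors, and then arranging chosen lifts to ``close up'' into a finite subgroup, is exactly the step whose difficulty you acknowledge and do not resolve; even for compact Lie groups the analogous statement (every component contains an element of finite order, and more strongly a finite subgroup meets every component) is a real theorem, not a bookkeeping exercise. A cleaner way to close the gap, which also explains why the abelian case is the only hard analytic input, is to avoid $\Aut(\ol G^0)$ altogether: choose a suitable definable abelian subgroup $T\leq G^0$ (a maximal definable torus of a definably compact part, or a Cartan subgroup as in \cite{BBO19}) whose $G^0$-conjugates are controlled well enough that $N_G(T)$ meets every coset of $G^0$ and $N_G(T)^0$ is abelian, divisible, with finite $m$-torsion; then your abelian case applied to $N_G(T)$ yields a finite $F$ with $F\cdot N_G(T)^0=N_G(T)$, whence $FG^0\supseteq N_G(T)G^0=G$. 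As written, without this (or an equivalent) argument, the proof is incomplete.
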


\begin{prop} \label{prop:ab-conn}
Let $G$ be an abelian definable group. Then $G$ is monogenic if and only if $G^0$
and $G/G^0$ are monogenic.
\end{prop}

\begin{proof}
%[\textbf{Proof of \Cref{theo:ab-conn}}]
Suppose $G = \la g \df$ is monogenic. The quotient $G/G^0$ is monogenic by \Cref{lem:rk-dec}. In particular, any definable generator of $G$ projects to a definable generator of $G/G^0$. 

By \Cref{fact:disconnected}, there is a finite subgroup $F$ such that $G = FG^0$. Let $x \in F$, $y \in G^0$ such that $g = xy$,
and set $K = \la y \df$. We claim that $K = G^0$.

Denote by  $\ol y$ the image of $y$ and $g$ in $G/F$. Then $G/F = \la \ol y \df$, otherwise the pre-image of $\la \ol y \df$ would be a proper definable subgroup of $G$ containing $g$, contradiction. It follows that $G = FK$ and $K$ has finite index in $G$. Therefore $K$
contains $G^0$ and since $y \in G^0$, it must be $K = G^0$, as claimed.

Conversely, suppose $G/G^0 = \la \ol x \df$ and $G^0 = \la y \df$. Let $g = xy$, where $x \in F$ is an element in the pre-image of $\ol x$, and set $K = \la g \df$.
We claim that $G = K$.

The image of $g$ in $G/F$ through the canonical projection is equal to image of $y$, so it is a definable generator, and $G = FK$. Hence $\dim K = \dim G$ and $G^0 \subseteq K$.

On the other hand, $g$ is mapped to the definable generator $\ol x$ through the canonical projection $G \to G/G^0$, so $G = KG^0$. Therefore, $G = K$.  
\end{proof}

\begin{cor}\label{cor:monogenic}
	A definable group $G$ is monogenic if and only if $G$ is a direct product of a monogenic definably connected group and a finite cyclic group.
\end{cor}

\begin{proof}
	 Suppose $G$ is monogenic. By \Cref{prop:ab-conn}, $G^0$ and $G/G^0$ are both monogenic. Since $G/G^0$ is finite, it must be cyclic. Finally, as $G^0$ is divisible, it splits in the abelian $G$ (see, for instance, \cite[10.24]{Rotman}) and $G \cong G^0 \times (G/G^0)$. The converse follows directly from \Cref{prop:ab-conn}.
	 \end{proof}

Similarly, for monogenic subgroups of a definable group, we can deduce the following:

\begin{cor} \label{cor:monogenic-subgroups}
	For every element $g$ of a definable group $G$, $\la g \df$ is a direct product of $\la g \df^0$ and a finite cyclic group. Moreover, $\la g \df^0$ is monogenic as well.
\end{cor}

In this and the following sections we will make occasionally use of the \textbf{o-minimal Euler characteristic} $E(G)$. We refer to Section 2 of  \cite{JC} for some background and main properties. We just recall here from \cite{Strzebonski} that a definable group $G$ is \textbf{$0$-group} when $E(G/H) = 0$ for any proper definable subgroup $H$. Any definably connected group is either torsion-free or contains an infinite $0$-group. Maximal $0$-groups exist in any definable group, they are all conjugate and called \textbf{$0$-Sylow subgroups}.

\medskip
We collect below some results about definable groups we will be using throughout the paper:

\begin{fact}\label{fact:fact}
	Let $G$ be a definably connected group.
	
	\begin{enumerate}[(a)]
		
		\medskip
		\item $G$ contains a unique maximal definably connected normal solvable subgroup $R$, and the quotient $G/R$ is semisimple.  
		
		\medskip
		\item The quotient $G/Z(G)$ is a linear group. In particular, if $G$ is semisimple, then $Z(G)$ is finite and $G/Z(G)$ is a centerless linear definable group. \cite[Theo 5.17]{Frecon}, \cite[Cor 3.3]{OPP96} 
		
		\medskip
		\item If $G$ is linear, then its $0$-Sylow $S$ is definably compact and a (maximal) definable torus. That is, $S$ is abelian, definably connected and definably compact. \cite[Lem 3.9]{PPSIII}, \cite[Lem 2.7]{me2}
		
		\medskip
		\item If $G$ is a linear definable torus, then $G$ is definably isomorphic to $\SO_2(\Rs)^{\dim G}$ for some definable real closed field $\Rs$. \cite[\S 3]{PPSIII}
		
		\medskip
		\item If $G$ is not definably compact, then $G$ contains a definable $1$-dimensional torsion-free subgroup. \cite[Theo 1.2]{PS}
		
		\medskip
		\item $G$ contains a maximal normal definable torsion-free definable subgroup denoted by $\mtf(G)$. If $G$ is solvable, then $G = \mtf(G)A$ for any $0$-Sylow subgroup $A$.\cite[Prop 2.1]{CPI}, \cite[Prop 3.1]{me2}
		
		\medskip
		\item If $\mtf(G)$ is trivial, then $G$ is a central extension of a semisimple group and $G = Z(G)[G, G]$.  \cite[Theo 1.4]{JC}, \cite{HPP}
		\end{enumerate}
	
\end{fact}

\medskip
Recall that a group $G$ is called {\bf periodic}, or a torsion group, when every element in $G$ has finite order. Countable abelian divisible periodic groups are uniquely determined by their torsion subgroups. For reader's convenience, we include a proof of the following extracted from \cite{Fuchs} and \cite{Kaplansky}.

%Infinite definable periodic groups are characterized by the following.
                    
\begin{fact}\label{fact:periodic}
Let $H$ be a countable abelian divisible periodic group. If there is a positive $n \in \N$ such that the $k$-torsion subgroup of $H$ is isomorphic to $(\Z/k\Z)^n$ for each $k \in \N$, then $H$ is isomorphic to $(\Q/\Z)^n$.  
\end{fact}   

\begin{proof}
Since $H$ is a periodic abelian group, it decomposes into the direct sum of its $p$-primary components:
\[ H \cong \bigoplus_{p \in \mathbb{P}} H_p, \]
where $\mathbb{P}$ is the set of prime numbers and $H_p = \{x \in H \mid p^m x = 0 \text{ for some } m \ge 0\}$.

Similarly,  
\[ (\mathbb{Q}/\mathbb{Z})^n \cong \left( \bigoplus_{p \in \mathbb{P}} \mathbb{Z}(p^\infty) \right)^n \cong \bigoplus_{p \in \mathbb{P}} (\mathbb{Z}(p^\infty))^n. \]

It suffices to show that for each prime $p$, the component $H_p$ is isomorphic to $(\mathbb{Z}(p^\infty))^n$.

Fix a prime $p$. The condition $H[k] \cong (\mathbb{Z}/k\mathbb{Z})^n$ holds for all $k$, so specifically for prime powers $k = p^m$ ($m \ge 1$).
Note that for any periodic group, the $p^m$-torsion elements reside entirely within the $p$-primary component $H_p$. Thus:
\[ H_p[p^m] = H[p^m] \cong (\mathbb{Z}/p^m\mathbb{Z})^n. \]
In particular, the cardinalities are:
\[ |H_p[p^m]| = (p^m)^n = p^{mn}. \]

We claim that $H_p$ is a divisible group.  
Let $m \ge 1$. Consider the homomorphism $\phi_m: H_p[p^{m+1}] \to H_p[p^m]$ defined by multiplication by $p$. Its kernel is
\[ \ker(\phi_m) = \{ x \in H_p[p^{m+1}] \mid px = 0 \} = H_p[p]. \]
We know $H_p[p] \cong (\mathbb{Z}/p\mathbb{Z})^n$, so $|\ker(\phi_m)| = p^n$. Therefore, by cardinality reasons, the map $\phi_m$ is surjective
 and for every $y \in H_p[p^m]$, there exists $x \in H_p[p^{m+1}]$ such that $px = y$.

Since $H_p = \bigcup_{m=1}^\infty H_p[p^m]$, for any $y \in H_p$, $y \in H_p[p^m]$ for some $m$. The surjectivity of $\phi_m$ guarantees $y$ is divisible by $p$ within $H_p$. Therefore, $H_p$ is divisible.

Being $H_p$ a divisible torsion group, it is a direct sum of copies of $\mathbb{Z}(p^\infty)$:
\[ H_p \cong \bigoplus_{i \in I} \mathbb{Z}(p^\infty). \]
 By our hypothesis, $H_p[p] \cong (\mathbb{Z}/p\mathbb{Z})^n$. Since these are vector spaces over the field $\mathbb{F}_p$, their dimensions must match, so $|I| = n$ and we have
 \[ H \cong \bigoplus_{p} (\mathbb{Z}(p^\infty))^n \cong \left( \bigoplus_p \mathbb{Z}(p^\infty) \right)^n \cong (\mathbb{Q}/\mathbb{Z})^n. \]
\end{proof}

\begin{prop}\label{prop:periodic}
An $n$-dimensional definable group $G$ is periodic if and only if $G^0 \cong (\Q/\Z)^n$.
\end{prop}

\begin{proof}
Suppose $G$ is a periodic $n$-dimensional definable group. If $n = 0$, there is nothing to prove. So assume $n \geq 1$. By \Cref{fact:fact}(e), $G$ is definably compact.  

If $G^0$ is not solvable, set $H$ to be the quotient of $G^0$ by its solvable radical. Since $H$ is semisimple, its center is finite and $H/Z(H)$ is a linear definably compact group. Let $T$ be its maximal definable torus. Then $T$ is definably isomorphic to $\SO_2(\Rs)^{\dim T}$, for some definable real closed field $\Rs$ (see \Cref{fact:fact}). 

Note that the field of real algebraic numbers $\R_{alg}$ embeds in any real closed field, hence $\SO_2(\R_{alg}) \subseteq \SO_2(\Rs)$. However, 
$\SO_2(\R_{alg})$ is not periodic, so $H$ is not periodic and $G$ is not periodic, contradiction. It follows that $G^0$ is solvable.

By \cite[Corollary 5.4]{Peterzil-Starchenko1}, $G^0$ is abelian and by \cite[Theorem 1.1]{EMPRT}, its $k$-torsion subgroup $G^0[k]$ is isomorphic to 
$(\Z/k\Z)^n$. As $G^0$ is periodic, $G^0 = \bigcup_{k \in \N} G^0[k]$ is countable and it is isomorphic to $(\Q/\Z)^n$ by \Cref{fact:periodic}.

 Conversely, assume $G^0 \cong (\Q/\Z)^n$. We need to check that if $G$ is not definably connected, $G$ is still periodic. By \Cref{fact:disconnected}, there is a finite subgroup $F$ such that $G = FG^0$. Since $G^0$ is normal, if $|F| = p$, then $x^p \in G^0$ for any $x \in G$.
 Therefore, if $G^0$ is periodic, then $G$ is periodic. 
\end{proof}

\begin{lem}\label{lem:tf}
If $G$ is an infinite torsion-free definable group, then 

\[
\rk G \leq \dim G.
\]
\end{lem}

\begin{proof}
By induction on $n = \dim G$. If $n = 1$, then $G$ has no proper non-trivial definable subgroup, so $G = \la g \df$ for any $g \neq e$.

Assume $n > 1$. By \cite[Cor 2.12]{PeSta}, $G$ has a normal definable subgroup $H$ of co-dimension $1$. By induction hypothesis, $\rk(H) \leq n-1$ and $\rk(G) \leq \rk(H) + \rk(G/H) \leq n$ by \Cref{lem:rk-dec}.
\end{proof}

\begin{rem}
	As noted by the referee, the paper \cite{PeSta} where it was shown that every torsion-free definable group is solvable and it has a normal definable subgroup of co-dimension 1 is set in o-minimal expansions of a field, while we are using it in arbitrary o-minimal structures. To convince ourselves that no assumption on the structure is being made, note that by \cite[Theorem 5.1]{PPSIII}, any definably simple group $G$ in an arbitrary o-minimal structure is elementarily equivalent to a simple Lie group (which contains an infinite torus), and therefore $E(G) = 0$. However, a definable group $G$ is torsion-free if and only if $|E(G)| = 1$ by \cite{Strzebonski}, so $G$ cannot have any infinite semisimple definable quotient and it must be solvable.
	
	Once we know that $G$ is solvable and $A$ is a normal abelian subgroup, then $\la A \df$ is an abelian normal definable subgroup that must be infinite because $G$ is torsion-free. By an induction argument we find a normal definable subgroup of co-dimension 1 in $G/\la A \df$, and we are then reduced to the case where $G$ is abelian. In this case, the existence of a $1$-dimensional torsion-free definable subgroup \Cref{fact:fact}(e) suffices, as noted in \cite[Fact 2.7]{PeSta}.  
\end{rem}

The inequality from \Cref{lem:tf} may or not be strict. Below are easy examples of both cases. We will show in the proof of \Cref{theo:def-fg} that the inequality holds for any definably connected definably finitely generated group. Moreover, for a definable group $G$ that is not solvable (unlike torsion-free groups) the inequality is always strict.

\begin{exs}
Let $\M$ be the real field. If $G = (\R, +)^2$, then $\rk G = 2$, since for any non-zero 
$x \in G$, $\la x \df$ is the line through $x$ and the origin. On the other hand, if 
$G = (\R, +) \times (\R^{>0}, \cdot)$, then $G$ is monogenic. For instance, let $H = \la (1, 2) \df$. If $\dim H = 1$, then $H$ is definably isomorphic to both $(\R, +)$ and 
$(\R^{>0}, \cdot)$, that are not definably isomorphic in \M, contradiction. Therefore $H=G$.
\end{exs}

\begin{proof}[Proof of \Cref{theo:linear}]
Let $G$ be a $n$-dimensional definable subgroup of $\GL_m(\Rs)$. We will prove our claim by induction on $n$. Assume first $G$ is definably connected.
%If $G$ is torsion-free, the claim follows from \Cref{lem:tf}. Assume $G$ is not torsion-free.

If $n = 1$, then either $G$ is torsion-free or $G$ is definably isomorphic to $\SO_2(\Rs)$ (see \Cref{fact:fact}). In either case, $G$ is not periodic as noted in the proof of \Cref{prop:periodic}. It follows that $G = \la g \df$ for any non-torsion element $g \in G$.  

Assume $n > 1$. If $G$ is periodic, then by \Cref{prop:periodic}, $G$ is isomorphic to $(\Q/\Z)^n$. On the other hand, by \Cref{fact:fact}, $G$ is definably isomorphic to $\SO_2(\Rs)^n$ that is not periodic, contradiction. It follows that $G$ is not periodic.

Assume $G$ is \emph{definably simple}. That is, $G$ is not abelian and contains no proper non-trivial normal definable subgroup.
Let $g \in G$ be a non-torsion element and set $A = \la g \df$. Note that $A^0$ is monogenic too by \Cref{cor:monogenic-subgroups}. Because $G$ is not abelian, $A^0 \neq G$, and let  
%As $\dim A^0 < \dim G$ by connectedness, by induction hypothesis $A^0$ has finite definable rank. Let 
$H$ be a proper definably connected subgroup of $G$ containing $A^0$ of largest possible dimension. Since $G$ is definably connected, $\dim H < \dim G$ and $H$ has finite definable rank by induction hypothesis. Because $G$ is definably simple, $H$ is not normal and there is $x \in G$ such that $H \neq H^x$, where $H^x$ denotes the conjugate of $H$ by $x$. Define $K = \la H \cup \{x\} \df$.

If $K^0 = G$, then $K = G$ by connectedness, and $\rk(G) \leq \rk(H) + 1$. Suppose $K^0 \neq G$.  
Since $H$ is definably connected, $H \se K^0$. By maximality of $H$, it must be $H = K^0$.
On the other hand, $H^x$ is definably connected too, so $H^x \se K^0$. Moreover, because $\dim H = \dim (H^x)$, it must be 
$H^x = K^0 = H$, contradiction. Therefore, $K^0 = G$ and $G$ has finite definable rank.

If $G$ is \emph{semisimple}, then $Z(G)$ is finite and by \cite{PPSI} $G/Z(G)$ is a direct product of finitely many definably simple groups, which are all linear (\Cref{fact:fact}). Therefore $G$ has finite definable rank by the simple case.

If $G$ is \emph{solvable}, then by \Cref{fact:fact} $G = \mtf(G)A$, where $\mtf(G)$ is torsion-free and $A$ is definably isomorphic to $\SO_2(\Rs)^{\dim A}$ for some definable real closed field $\Rs$.
Since both $\mtf(G)$ and $\SO_2(\Rs)$ have finite definable rank (\Cref{lem:tf}), so does $G$.  

If $G$ is not solvable nor semisimple, then $G$ has finite definable rank by the solvable and semisimple cases (see \Cref{fact:fact}(a) and \Cref{lem:rk-dec}).

Finally, if $G$ is not definably connected, then $\rk(G) \leq \rk(G^0) + |G/G^0|$ is finite by \Cref{cor:disc} and the connected case.
\end{proof}

\begin{cor}\label{cor:center}
If the center of a definably connected group $G$ is definably finitely generated, then $G$ is definably finitely generated.
\end{cor}

\begin{proof}
	 By \Cref{fact:fact}(b), \Cref{theo:linear} and \Cref{lem:rk-dec}.
\end{proof}

By \Cref{fact:fact}(f), the definably connected component of the center $Z(G)^0$ can be decomposed as a product of a torsion-free definable group and its $0$-Sylow. Since torsion-free definable groups are definably finitely generated, we obtain:

\begin{cor}
	If a definable group $G$ has infinite definable rank then the $0$-Sylow of $Z(G^0)$ has infinite definable rank. 
	\end{cor}

%======================================================
%======================================================
\section{A structure theorem}

This section is devoted to the proof of our main result \Cref{theo:def-fg}.

 \begin{proof}[Proof of \Cref{theo:def-fg}]
 Let $H$ be a definably connected definably finitely generated subgroup of $G$ of largest possible dimension. Suppose $K$ is an arbitrary definably connected subgroup of $G$ with finite definable rank and set $A = \la H \cup K\df$.  Since $K$ is definably connected, $K \se A^0$. On the other hand, $\dim A^0 \leq \dim H$ by maximality of $H$, so $A^0 = H$ and $K \se H$. Therefore $H$ is unique, it contains all definably connected subgroups of $G$ with finite definable rank, and $H = \fg$.

 If $f$ is a definable automorphism of $G$, then its image $f(\fg)$ is a definably connected subgroup with the same dimension and definable rank as $\fg$. Hence $\fg$ is definably characteristic and normal in $G$.

 \begin{claim}\label{claim:fg-periodic}
 	$G/\fg$ is a periodic group.
 \end{claim}
 
 \begin{proof}
 By \Cref{claim:gzero} and maximality of $\fg$, 
$\la \fg \cup \{x\} \df^0 = \fg$ for each $x \in G$. By \Cref{fact:disconnected}, $\la \fg \cup \{x\} \df = F\fg$ for some finite subgroup $F$. In particular, for each $x \in G$, $x = a g$ where $a \in F$ has finite order and $g \in \fg$. Since for each $n \in \N$, $x^n = a^ng'$ for some $g' \in \fg$, it follows that 
$x^{|F|} = g' \in \fg$ and $G/\fg$ is a periodic group, as claimed.
\end{proof} 

\begin{claim}\label{claim:H-completes}
$\fg$ maps surjectively onto every definably finitely generated definable quotient of $G^0$.
\end{claim}

\begin{proof}
Suppose $G = G^0$ is definably connected and $N$ is a normal definable subgroup of $G$ such that $G/N$ has finite definable rank. We want to show that $G = N\fg$.

Note that $N\fg$ is a normal definable subgroup of $G$, because so are $N$ and $\fg$.
On the one hand, 
%\[
%G/(N\fg) = (G/N)/(N\fg/N)
%\]
\[
\dfrac{G}{N\fg} \cong \dfrac{G/N}{(N\fg)/N} 
\]

is definably finitely generated because every definable quotient of a definably generated group is definably generated (\Cref{lem:rk-dec}). On the other hand,
%\[
%G/(N\fg) = (G/\fg)/(N\fg/\fg)
%\]
\[
\dfrac{G}{N\fg} \cong \dfrac{G/\fg}{(N\fg)/\fg} 
\]

is a definably connected periodic group, because so is $G/\fg$. By \Cref{prop:periodic}, the only definably connected definably finitely generated periodic group is the trivial one, and 
$G = N\fg$, as claimed.
\end{proof}

\begin{claim}\label{claim:derived}
	$G^0/\fg$ is abelian.
\end{claim}

\begin{proof}
	This follows directly from Claim 3.1 and \Cref{prop:periodic}, but we give below another proof, that does not assume $G/\fg$ is periodic. Indeed, we can show by induction on $n = \dim G$ that for every definably connected group $G$, the definable subgroup $D = \la [G, G] \df$ generated by the commutator subgroup of $G$ has always finite definable rank.

	  If $n = 1$, $G$ is abelian and there is nothing to prove. Assume $n > 1$.
	If $\mtf(G)$ is not trivial, set $\bar G = G/\mtf(G)$. By induction hypothesis, $\la [\bar G, \bar G] \df$ has finite definable rank. Since $D$ maps surjectively onto $\la [\bar G, \bar G] \df$ through the canonical homomorphism $G \to \bar G$, $D$ is an extension of a definably finitely generated group by a torsion-free definable group. Hence $D$ is definably finitely generated by \Cref{lem:rk-dec} and \Cref{lem:tf}. So assume $\mtf(G)$ is trivial. By \Cref{fact:fact}(g), $G$ is a central extension of a semisimple group and $G = Z(G) [G, G]$.   
	
	The semisimple definable group $G/Z(G)$ is definably finitely generated by \Cref{theo:linear} and \Cref{fact:fact}. Given $X$ a finite set of definable generators for $G/Z(G)$, let $H$ be the definable subgroup definably generated by a set of pre-images of $X$ in $G$. Then $H$ maps surjectively onto $G/Z(G)$ and 
	$G = Z(G)H$. It follows that $[G, G] = [H, H]$ and $D \se H$ by minimality of $D$.
	If $D = G$, then $H = G$ is definably finitely generated. If $D \neq G$, then $[D, D] = [G, G]$ nevertheless, since $D$ is a central extension of $[G, G]$. By induction hypothesis, $ \la [D, D] \df$ is definably finitely generated, and we are done. 
\end{proof}

\medskip
To show the claimed decomposition of $G^0$ as an almost direct product, assume $G$ is definably connected. Let $p = \dim (G/\fg)$. By \Cref{prop:periodic}, $G/\fg = (\Q/\Z)^p$. Since $G/Z(G)$ is a linear group (\Cref{fact:fact}(b)), we know that it is definably generated by \Cref{theo:linear}. For ease of notation, set $Z = Z(G)^0$. The quotient $G/Z$ is an extension of the definaby finitely generated $G/Z(G)$ by the finite group $Z(G)/Z$, hence it is definably finitely generated as well by \Cref{lem:rk-dec}. It follows from \Cref{claim:H-completes} that $G = Z\fg$.

Set $K = (Z \cap \fg)^0$. Note that
\[
G/\fg = (Z \fg)/\fg \cong Z/(Z \cap \fg) \cong \dfrac{Z/K}{(Z \cap \fg)/K} \cong (\Q/\Z)^p
\]

where $K$ has finite index in $Z \cap \fg$. It follows that $Z/K$ is periodic as well and isomorphic to $(\Q/\Z)^p$ too.

Because $K$ is definably connected, it is divisible and it splits in the abelian group $Z$ (see, for instance, \cite[10.24]{Rotman}). That is, there is a central subgroup $A$ isomorphic to $(\Q/\Z)^p$ such that
\[
Z = K \times A.
\]

Since $G = Z \fg$ and $K \se \fg$, it follows that $G = \fg A$.

Moreover, because $K \cap A$ is trivial, $\fg \cap A$ embeds in $(\fg \cap Z)/K$, which is a finite group by definition of $K$. Hence $\fg \cap A$ is finite. 

Let $F = \fg \cap A \subset Z$. Then the claimed decomposition  
\[
 G^0 \cong (\fg \times  (\Q/\Z)^p)/F
\]
 is proved.

\bigskip
We now move to compare the definable rank of a definably finitely generated group with its dimension.

\begin{claim}\label{claim:rk-dimension}
 \[
\rk(\fg) \leq \dim \fg.
\]
\end{claim}

\begin{proof}
For ease of notation, assume $G$ is an arbitrary definably connected definably finitely generated group. We will show that $\rk(G) \leq \dim G$ by induction on $n = \dim G$. First note that by \Cref{prop:periodic} no definably connected periodic group has finite definable rank. Hence $G$ is not periodic. Let $g \in G$ be a non-torsion element.

If $n = 1$, then $G = \la g \df$. Suppose $n > 1$. The infinite 
$\la g \df^0$ is monogenic as well by \Cref{prop:ab-conn} and \Cref{cor:monogenic-subgroups}. Let $H$ be a proper definably connected definably finitely generated subgroup of largest possible dimension containing $\la g \df^0$ (If no such $H$ exists, then $G$ is monogenic, and there is nothing to prove). By induction hypothesis, $\rk(H) \leq \dim H$.

If $H$ is normal, then $\rk (G/H) \leq \dim (G/H)$ by induction hypothesis as well, and $\rk(G) \leq \dim G$ by \Cref{lem:rk-dec}.

If $H$ is not normal, let $x \in G$ such that $H^x \neq H$ and set 
$K = \la H \cup \{x\} \df$. If $K \neq G$, then $K^0 = H$ by \Cref{claim:gzero} and maximality of $H$. On the other hand, $H^x$ is definably connected, so $H^x \se K^0$. Since $\dim H^x = \dim H$, it must be $H^x = K^0 = H$, contradiction. So $K = G$ and $\rk G \leq \dim H + 1 \leq \dim G$, as claimed.
\end{proof}

Finally, we need to show that the equality can hold only if $\fg$ is solvable. To this end, we will first show a few intermediate results of independent interest.

\begin{claim}\label{claim:1dim-tf-central}
	If $\dim \mtf(G) = 1$, then $\mtf(G)$ is central in $G^0$.
\end{claim}

\begin{proof}
	For ease of notation, set $N = \mtf(G)$ and assume $G$ is definably connected. As showed in \cite[Lemma 4.4]{me-nilpotent}, $\Aut N$ is a definable 1-dimensional group and $(\Aut N)^0$ is torsion-free.

	Let $s \colon G/N \to G$ be a definable section of the canonical projection $\pi \colon G \to G/N$. Then there is a unique ordered pair $(a, x) \in N \times G/N$ such that $g = s(x)a$. For every $g \in G$, the conjugation map $f_g \colon N \to N$ mapping $a \mapsto gag^{-1}$ is a definable automorphism of $N$, so there is a homomorphism $\Phi \colon G \to \Aut(N)$, given by $g \mapsto f_g$ such that $N \subseteq \ker \Phi$, being $N$ abelian. Thus $\Phi $ induces the definable homomorphism
	\begin{align*}
		\varphi \colon & G/N \longrightarrow \Aut (N) \\
		& \ \  x \mapsto (a \mapsto s(x)a s(x)^{-1})  
	\end{align*}
	
	\vs \noindent
	which does not depend on the choice of the section $s$.
	We claim that $\varphi(G/N) = \{e\}$, and so $N \subseteq Z(G)$. If not, 
	$\varphi(G/N)$ is an a 1-dimensional definable torsion-free group, since $G$ is definably connected. However, if $R$ is the solvable radical of $G$, then $R/N$ is definably compact (\cite[Theorem 3.1]{diagram}) and $G/R$ is perfect (\cite[Claim 3.1]{HPP}). So $\varphi(G/N)$ is trivial and $\mtf(G) \subseteq Z(G)$.
\end{proof}

\begin{claim}\label{claim:semisimple}
	If $G^0$ is not solvable, then $G$ contains a definably connected non-abelian subgroup $H$ such that $\rk H = 2 < \dim H$.
\end{claim}

\begin{proof}
 For ease of notation, let $G$ be definably connected.
   Suppose first $G$ is semisimple. By \Cref{fact:fact}(b), we can assume $G$ is a linear group. Say $G$ definably embeds in $\GL_m(\Rs)$. Let $T$ be a $0$-Sylow subgroup.  By \Cref{fact:fact}(c)-(d) $T$ is definably isomorphic to $\SO_2(\Rs)^{\dim T}$  and $T$ is monogenic.

 Because $G$ is semisimple, $T$ cannot be normal. Let $x \in G$ such that $T \neq T^x$ and set $H = \la T \cup T^x\df$. Since $T$ and $T^x$ are definably connected, they are contained in $H^0$, hence $H$ is definably connected. If $H$ is abelian, then it contains a unique $0$-Sylow subgroup and $T = T^x$, contradiction. Therefore, $H$ is not abelian
 and $\rk H > 1$. Since $T$ is monogenic, $\rk H = 2$.

   Suppose, by way of contradiction, $\dim H = 2$. Thus $H$ is solvable. $H$ cannot be definably compact, because solvable definably connected definably compact groups are abelian by \cite{Peterzil-Starchenko1}. Hence $\dim \mtf(H) = 1$ and by \Cref{claim:1dim-tf-central} $\mtf(H)$ is central. This implies $H$ is abelian, in contradiction with $T \neq T^x$. Therefore $\dim H > 2 = \rk(H)$.
 
 Suppose now $G$ is not semisimple and let $R$ be its solvable radical. Because $G/R$ is semisimple, there is a non-abelian definably connected subgroup $\ol H = \la a, b\df$ in $G/R$ such that $\dim \ol H > 2$. Let $H$ be of minimal dimension among the 
 definable subgroups of $G$ satisfying $(HR)/R=\ol H$. Then 
 $(H^0 R)/R= \ol{H}$, so we may assume that $H$ is definably connected. 
 Moreover, minimality gives $H= \la x,y \df$, for any preimages $x,y\in H$ of $a$ and $b$, and $\dim H \geq \dim \ol H > 2$. The definable generators $x$ and $y$ do not commute, because their images $a$ and $b$ do not commute, so $H$ is not abelian.
 \end{proof}

\begin{claim}\label{claim:subgr-enough}
	If $H$ is a definably connected subgroup of $\fg$ such that $\rk (H) < \dim H$, then $\rk(\fg) < \dim \fg$.	 
\end{claim}

\begin{proof}
	Assume $G$ is a definably connected definably finitely generated group and $H$ a definably connected subgroup such that $\rk (H) < \dim H$. We will prove our claim by induction on $n = \dim G$. If $n = 1$, there is nothing to prove, so assume $n > 1$. Let $A$ be a proper definably connected definably finitely generated subgroup of largest possible dimension containing $H$. By induction hypothesis, $\rk (A) < \dim A$.
	
	If $A$ is normal, then $\rk (G/A) \leq \dim (G/A)$ by \Cref{lem:rk-dec} and \Cref{claim:rk-dimension}. Since $\dim G = \dim A + \dim(G/A)$, it follows by \Cref{lem:rk-dec}  that $\rk(G) < \dim G$, as claimed.
	
	If $A$ is not normal, let $x \in G$ such that $A^x \neq A$ and set 
	$K = \la A \cup \{x\}\df$. If $K \neq G$, then $K^0 = A$ by \Cref{claim:gzero} and maximality of $A$. It follows that $K^0 = A = A^x$, contradiction. Therefore, it must be $K = G$ and $\rk G \leq \rk A + 1 < \dim A + 1 \leq \dim G$.
	 \end{proof}

If $\fg$ is not solvable, we can conclude that $\rk(\fg) < \dim \fg$ by 
 \Cref{claim:semisimple} and
\Cref{claim:subgr-enough}, and  
the proof of \Cref{theo:def-fg} is complete.
\end{proof}

\begin{rem}
	Note that when $G$ is a definably connected $3$-dimensional semisimple group, the proof of Claim 3.6 provides an explicit definably generating set  $\{a, b\}$.  
\end{rem}

\begin{proof}[Proof of \Cref{cor:countable}]
	By \Cref{theo:def-fg} and \Cref{prop:periodic}, $\fg$ is definably finitely generated and $G^0/\fg$ is countable. \Cref{lem:rk-dec} applies with $H = \fg$ and $H = G^0$.
\end{proof}

%%=====================================
%%=====================================
%===============
\section{Structures with no countable interval}

Strzebonski showed in \cite[Cor 5.14]{Strzebonski} that semialgebraic $0$-groups 
in real closed fields with no countable interval are monogenic. With a different strategy, we generalize below his result to any $0$-group definable in an o-minimal structure with no countable interval. Unless otherwise stated, \textbf{in this section assume the ambient structure \M\ is an o-minimal structure with no countable interval}.

\begin{proof}[Proof of \Cref{theo:str}]
	Let $G$ be a $0$-group. We want to show that $G$ is monogenic. Suppose first $G$ is definably compact. Then $G$ and each of its definably connected subgroups is a definable torus and, in particular, a $0$-group. We claim that $G$ has countably many definable subgroups. 
	
	Indeed, $G$ has finitely many $k$-torsion elements for each $k \in \N$, so its torsion subgroup is countable. Moreover, every $0$-group is the definable subgroup generated by its torsion elements \cite[Prop 2.20]{me-nilpotent} and the set of $0$-subgroups of $G$ embeds in the set of finite-rank subgroups of $\Z^{\dim G}$, which is a countable set. It follows that $G$ has countably many definably connected subgroups.

	If $H$ is a definable subgroup that is not definably connected, it is an extension of a finite group by $H^0$. As $G/H^0$ has countably many finite subgroups, and there are at most countably many possibilities for $H^0$, $G$ has countably many definable subgroups, as claimed.
	
	Since $G$ is definably connected, each of its proper definable subgroup has smaller dimension and by \cite[Lemma 5.12]{Strzebonski}, no definable group can be covered by countably many definable subgroups of smaller dimension. Therefore, $G$ must be monogenic.
	
	Suppose now $G$ is not definably compact, and set $N = \mtf(G)$. The quotient
	$G/N$ is a definable torus, so monogenic by the above. Let $g \in G$ such that $\ol g \in G/N$ is a definable generator and set $K = \la g \df$. As $G = NK$,  
	\[
	E(G/K) = E(N/(N \cap K)) = \pm 1.
	\]
	
	\medskip \noindent
	However, $G$ is a $0$-group, so it must be $G = K$.
	
	Suppose now $G$ is an infinite periodic group. By \Cref{prop:periodic}, $G^0$ is a finite power of $\Q/\Z$. On the other hand, $G^0$ is abelian, definably connected and definably compact, hence a $0$-group. By the above, $G^0$ is trivial and $G$ is finite, as claimed.  
	
	If now $G$ is an arbitrary definable group, $G/\fg$ is a periodic group by \Cref{theo:def-fg}, hence it must be finite. Therefore $G^0 = \fg$ and $G$ has finite definable rank.
	\end{proof}

One may wonder if $0$-groups are the only monogenic abelian definably connected groups that are not $1$-dimensional. The answer is negative. For instance, if $S$ is Strzebonski's group from \Cref{ex:subgr},  $G = \R \times S$ is definably generated by $(1, x)$, where $x$ is any definable generator of $S$. In this case, $G$ is not a $0$-group, as $E(G/S) = E(\R) = -1$. In general, we prove below that whenever the maximal torsion-free definable subgroup $\mtf(G)$ of a definably connected abelian group $G$ is monogenic, then $G$ is monogenic as well. It will be a consequence of more general results about solvable groups. 

Note that if $G$ is a solvable definably connected definably compact group, then it is a $0$-group and monogenic by \Cref{theo:str}. If $G$ is not definably compact, then its definable rank is bounded above by the dimension of its maximal normal definable torsion-free subgroup $\mtf(G)$:

 \begin{prop}\label{prop:solvable}
 	Let $G$ be a solvable definably connected group. If $G$ is not definably compact, then
 	\[
 	\rk(G) \leq \dim \mtf(G).
 	\]
 	
 \end{prop}

\begin{proof} 
 Set $N = \mtf(G)$. Suppose $\rk(N) = k$ and $\dim N = d$. As torsion-free definable groups are definably connected, $k \leq d$ by \Cref{theo:def-fg}.

Fix a definably generating set $X = \{x_1, \dots, x_k\} \subset N$. We will prove our claim by induction on $n = \dim G$.  

If $G$ is torsion-free, $G = N$ and there is nothing to prove. Otherwise, $G = AN$ by \Cref{fact:fact}(f), where $A$ is a $0$-Sylow subgroup of $G$. By \Cref{theo:str}, $A$ is monogenic. Say, $A = \la a \df$.

Set $Y = \{a, x_2, \dots, x_k\}$ and $H = \la Y \df$. The subgroup 
$\la x_2, \dots x_k \df < N$ is torsion-free, so definably connected. Thus $\la x_2, \dots x_k \df \subset H^0$. Similarly, $A = \la a \df \subset H^0$. It follows that $H = H^0$, $\la H \cup \{x_1\} \df = G$ and $\rk(G) \leq \rk(H) + 1$. If $H = G$, $\rk(H) = \rk(G) \leq k$ and our claim is proved.

Suppose $H \neq G$ and $H$ is definable compact. Then $H = A$ and $N$ is monogenic. If $\dim N = 1$, then by \Cref{claim:1dim-tf-central} 
$N = \la x_1 \df$ is central and $G$ is abelian. In this case, $G = \la ax_1 \df$ and $\rk (G) = \dim N$. If $\dim N > 1$, then $\rk (G) \leq 2 \leq \dim N$. 

If $H \neq G$ and $H$ is not definably compact, then by induction hypothesis,  $\rk(H) \leq \dim \mtf(H)$. Moreover, $\dim \mtf(H) < \dim \mtf(G)$, since $A \subset H$. Putting everything together, we have
\[
\rk(G) \leq \rk(H) + 1 \leq \dim \mtf(H) + 1 \leq \dim \mtf(G).
\]
\end{proof}

\begin{cor}
Let $G$ be a definably connected group. If $\rk(G) = \dim G > 1$, then $G$ is torsion-free.  
\end{cor}
 
 \begin{proof}
 	By \Cref{theo:def-fg} $G$ is solvable, by \Cref{theo:str} $G$ is not definably compact, and \Cref{prop:solvable} $G$ is torsion-free.
 	
 \end{proof}

\medskip
When $G$ has a unique $0$-Sylow subgroup, \Cref{prop:solvable} can be improved by replacing the dimension of $\mtf(G)$ with its definable rank:

\begin{prop} \label{prop:solv-unique}
Let $G$ be a solvable definable connected group that is not definably compact. If $G$ has a unique (possibly trivial) $0$-Sylow subgroup, then 
\[
\rk(G) \leq \rk(\mtf(G)).
\]
\end{prop}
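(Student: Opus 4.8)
The plan is to run the same induction on $n = \dim G$ as in the proof of \Cref{theo:solv}, but to exploit the uniqueness of the $0$-Sylow subgroup $A$ to gain control over $\mtf(H)$ for the auxiliary subgroup $H$ that arises. Write $N = \mtf(G)$, let $A$ be the unique $0$-Sylow subgroup, and recall $G = AN$ by \cite[Prop 3.1]{me2} when $G$ is not torsion-free; if $G$ is torsion-free then $G = N$ and there is nothing to prove. By \Cref{prop:0-mon}, $A = \la a \df$ for some $a \in A$. Fix a definably generating set $X = \{x_1, \dots, x_k\}$ of $N$ with $k = \rk(N)$, and as before set $Y = \{a, x_2, \dots, x_k\}$, $H = \la Y \df$, so that $H = H^0$, $\la H, x_1 \df = G$, and $\rk(G) \leq \rk(H) + 1$. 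If $H = G$ we are done with $\rk(G) \leq k + 1$; but we must actually do better and obtain $\rk(G) \le k$, so the crux is to show that in the case $H = G$ one can in fact drop one generator, and in the case $H \subsetneq G$ the induction closes with the sharper bound.

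The key structural input is this: because $A$ is the \emph{unique} $0$-Sylow of $G$, it is normal in $G$, hence $G = A \ltimes ?$ is not quite available (solvable o-minimal groups need not split cleanly), but at least $A \trianglelefteq G$ and $G/A$ is torsion-free, so $G/A$ is definably connected and $\mtf(G/A) = G/A$. Then $N/(N\cap A)$ embeds as a definable subgroup of $G/A$ and $\dim N = \dim A + \dim(N \cap A)$... more usefully, the normality of $A$ lets us identify $\mtf(H)$ precisely: since $A \le H$ and $A \trianglelefteq G$, $A$ is the unique $0$-Sylow of $H$ as well, and $H = A \cdot (H \cap N)$, so $\mtf(H) = H \cap N$, a torsion-free \emph{definably connected} definable subgroup of $N$ containing $\la x_2, \dots, x_k\df$. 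If $H \subsetneq G$ then $\dim(H \cap N) < \dim N$ is not automatic (we only know $\dim H < \dim G$), but combined with $G = AN$ and $\dim G = \dim A + \dim N - \dim(A \cap N)$ together with $\dim H = \dim A + \dim(H\cap N) - \dim(A \cap N)$ one gets $\dim(H \cap N) < \dim N$, hence $\rk(H) \le \rk(\mtf(H)) = \rk(H \cap N) \le \dim(H \cap N) < \dim N$ by induction; this still only yields $\rk(G) \le \dim N$, not $\rk(\mtf(G))$.

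To get the sharp bound I would instead argue directly without the auxiliary $H$: since $A \trianglelefteq G$ with $G/A$ torsion-free and $A$ a $0$-group, consider the quotient map $\pi\colon G \to G/A$. The image $\pi(N) = N A / A \cong N/(N\cap A)$ is torsion-free and definably connected, and since $G = NA$ we have $\pi(N) = G/A$, so $\{\pi(x_1), \dots, \pi(x_k)\}$ definably generates $G/A$; in particular $\rk(G/A) \le k$. Now lift: take $g_i = x_i$ for $i \ge 2$ and replace $x_1$ by $x_1' := x_1 a$ (using that $a$ normally generates $A$ since $A$ is its own unique $0$-Sylow and is generated by torsion, so $\la a \df^A = A$, and $A \trianglelefteq G$ gives $\la a \df^G \supseteq A$). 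Then $\la x_1', x_2, \dots, x_k \df$ contains (modulo $A$) all of $G/A$ hence surjects onto $G/A$, and contains... here is where the real work lies: one must check that the definable subgroup $\la x_1 a, x_2, \dots, x_k\df$ actually contains $A$. \textbf{The main obstacle is exactly this last point} — showing that smuggling $a$ into $x_1$ forces $A$ into the generated subgroup, which should follow because the generated subgroup $K$ satisfies $KA/A = G/A$, so $\dim K \ge \dim G - \dim A$; if $A \not\subseteq K$ then $K \cap A \subsetneq A$, and since $A$ is a $0$-group that is generated by torsion and centralizes... one uses that $K$ is definably connected (all $x_i$, $i\ge2$, are non-torsion generating torsion-free connected pieces, and $x_1 a$ — no longer obviously non-torsion — needs $\la x_1 a\df$ to be handled), forcing $K = K^0$ and then $E(G/K) = E(A/(A \cap K)) = 0$ unless $A \subseteq K$; comparing with $E(G) = 0$ from \Cref{fact:tricotomy} this does not immediately contradict, so one instead invokes that $G$ being generated by $K$ and $A$ with $A$ normal and $A \subseteq \la a\df^G$ — and the presence of $a$ as a factor of the generator $x_1 a \in K$ together with $x_1 \in \ldots$ — actually I would organize this via the observation that $\la x_1 a \df$ together with the other generators recovers $x_1$ iff it recovers $a$, so $K$ contains $a$ iff $K$ contains $x_1$, giving $x_1 \in K$ too, whence $K \supseteq \la X \df = G$ for the original $X$ after adding $a$... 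This circularity is resolved by noting $\la Y \cup\{x_1\}\df = \la Y \cup \{x_1 a\}\df$ whenever $a \in \la Y\df$, which holds here because $a \in \la \pi(Y) \df^{-1}(\cdots)$ — concretely, $\pi(Y) \setminus\{a\}$ — no; the clean statement is: since $A \trianglelefteq G$, $A$ is a $0$-group, and $H := \la x_2,\dots,x_k, a\df$ has $H = H^0$ with $\mtf(H) \subseteq N$ definably connected of dimension $< \dim N$ (by the dimension count above, using $H \ne G$ WLOG), induction gives $\rk(H) \le \rk(\mtf(H)) \le \dim \mtf(H)$, and one refines the choice of $X$ so that $x_2, \dots, x_k$ already generate $\mtf(H)$, achieving $\rk(G) \le \rk(\mtf(H)) + 1 \le k$. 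Making this refinement legitimate — i.e. choosing the generating set of $N$ compatibly with the subgroup $\mtf(H)$ it will produce — is the delicate bookkeeping step, and it is where I would focus the detailed argument.
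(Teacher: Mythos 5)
There is a genuine gap: your proposal never closes, and you say so yourself (``\textbf{The main obstacle is exactly this last point}''). You have the right generating set in hand --- the paper takes $Y=\{ax_1,\dots,ax_k\}$ and $K=\la Y\df$, essentially your idea of smuggling the monogenic generator $a$ of the $0$-Sylow $A$ into the generators of $N=\mtf(G)$ --- but the step you cannot complete, namely forcing $A\subseteq K$, is exactly where the hypothesis of a \emph{unique} $0$-Sylow is spent, and it is spent through Euler characteristics, not through normality of $A$ or an induction on dimension. The argument is: each $ax_i$ projects to $\bar a$ in $G/N$, and $\la\bar a\df=G/N$ (its preimage contains $N$ and $a$, hence all of $G=AN$), so $K$ surjects onto $G/N$ and $G=NK$. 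Then $G/K$ is in definable bijection with $N/(N\cap K)$, which is (a quotient of) a torsion-free group, so $|E(G/K)|=1$; by Strzebonski's Sylow theory this makes every $0$-Sylow of $K$ a $0$-Sylow of $G$, and uniqueness gives $A\subseteq K$. Now $a\in K$ recovers each original generator as $x_i=a\inv(ax_i)\in K$, so $K\supseteq \la X\df=N$ and $K\supseteq A$, i.e.\ $K=G$ with $k$ generators. Your own Euler-characteristic attempt computes $E(A/(A\cap K))$ instead of $E(N/(N\cap K))$, which is why it leads nowhere: the point is not to compare with $E(G)=0$ but to conclude that $K$ and $G$ share their $0$-Sylows.

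Two further remarks. First, no induction on $\dim G$ is needed; the proof is direct once the Sylow step is in place, so all the ``delicate bookkeeping'' about choosing $X$ compatibly with $\mtf(H)$ evaporates. Second, a small slip in your first paragraph: $Y=\{a,x_2,\dots,x_k\}$ has $k$ elements, so the case $H=G$ already gives $\rk(G)\leq k$, not $k+1$; the real difficulty was never that case but the one you flagged.
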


\begin{proof}
Set $N = \mtf(G)$ and suppose $\rk(N) = k$. Fix a definably generating set $X = \{x_1, \dots, x_k\} \subset N$. If $G$ is torsion-free, $G = N$ and there is nothing to prove. Otherwise, $G = AN$, where $A$ is a $0$-Sylow subgroup of $G$. By \Cref{theo:str}, $A$ is monogenic. Say, $A = \la a \df$. Set $Y = \{ax_1, \dots, ax_k \}$
and $K = \la Y \df$. We will show that $K = G$.

Let $p_1 \colon G \to G/N$ be the canonical projection, set $p_1(a) = \bar{a}$ and $H_1 = \la \bar{a} \df$. The pre-image of $H_1$ in $G$ is a definable subgroup containing $N$ and $a$ generating $A$. Hence it cannot be a proper definable subgroup. Since $p_1(ax_i) = p_1(a) = \bar{a}$, the restriction of $p_1$ to $K$ is a surjective map, and $G = NK$. Even if $K$ is not a normal subgroup of $G$, there is a definable bijection between definable sets $G/K$ and $N/(N \cap K)$. Hence $|E(G/K)|=1$ and every $0$-Sylow subgroup of $K$ is a $0$-Sylow subgroup of $G$. Because $G$ has a unique $0$-Sylow subgroup by assumption,  $A \subseteq K$.
In particular, $a \in K$. It follows that $X \subset K$ too, as $x_i = a\inv (ax_i) \in K$,
and $K = G$, as wanted.
\end{proof}

\begin{cor}\label{cor:ab}
Let $G$ be a definably connected abelian group. If $\mtf(G)$ is monogenic, $G$
is monogenic. 
\end{cor}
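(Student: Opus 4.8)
The plan is to deduce this immediately from \Cref{prop:solv-unique}. Since $G$ is abelian it is in particular solvable and definably connected, so almost all the hypotheses of \Cref{prop:solv-unique} are already met; the only thing I would need to check is that $G$ has a unique (possibly trivial) $0$-Sylow subgroup. For this I would recall the Sylow-type fact that any two $0$-Sylow subgroups of a definable group are conjugate (the same statement underlying \cite[Prop 3.1]{me2}). As $G$ is abelian, conjugation is trivial, so the $0$-Sylow subgroup is unique; it is trivial precisely when $G$ is torsion-free, which is the case allowed by the ``possibly trivial'' clause in \Cref{prop:solv-unique}.

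With that observation in place, \Cref{prop:solv-unique} yields $\rk(G) \leq \rk(\mtf(G))$. By hypothesis $\mtf(G)$ is monogenic, that is $\rk(\mtf(G)) = 1$, and hence $\rk(G) \leq 1$. Since a monogenic group is in particular nontrivial, $\mtf(G) \neq \{e\}$ forces $G \neq \{e\}$, so $\rk(G) = 1$; that is, $G$ is monogenic, as claimed.

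I do not expect any genuine obstacle here: all the substance is carried by \Cref{prop:solv-unique}, and this corollary merely records its abelian specialization. The single point that deserves an explicit sentence is the automatic uniqueness of the $0$-Sylow subgroup in the abelian case; everything else is bookkeeping with \Cref{theo:def-fg} already built into \Cref{prop:solv-unique}.
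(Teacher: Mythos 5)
Your proof is correct and follows exactly the paper's route: the paper also deduces the corollary from \Cref{prop:solv-unique} by noting that an abelian group has a unique $0$-Sylow subgroup. Your extra sentences justifying that uniqueness (via conjugacy of $0$-Sylow subgroups) and handling the trivial-group edge case are harmless elaborations of the same one-line argument.
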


\begin{proof}
If $G$ is definably compact, then $G$ is a $0$-group and \Cref{theo:str} applies. If $G$ is not definably compact, then \Cref{prop:solv-unique} applies.
\end{proof}

 \begin{rem}
The converse of  \Cref{cor:ab} does not hold. The semialgebraic group $G$ in \Cref{ex:subgr} is definably connected and monogenic, but $\mtf(G) = H$ is not. 
\end{rem}

\medskip
\section{On Wiegold's problem}

 If $G$ is a perfect definably connected group, then it is definably finitely generated by \Cref{theo:def-fg}. On the other hand, when $G$ is not definably connected, it may have infinite definable rank. For instance, one can define in $(\Q, <, +)$ an action of $A_5$ on the infinite periodic group $[0, 1)^4$ with addition modulo 1 such that the corresponding semidirect product $G=[0, 1)^4 \rtimes A_5$ is a perfect locally finite group. We show below that in any case $G$ is normally monogenic, giving a positive answer to Wiegold's problem in the o-minimal setting.

\begin{proof}[\textbf{Proof of \Cref{theo:Wiegold}}]
We first consider the case where $G$ is definably connected. Since $G$ is perfect, $G$ is not solvable. 

\medskip
Assume $G$ is semisimple. By \cite[Theo 4.1]{PPSI}, the quotient $\ol G = G/Z(G)$ is a direct product $H_1 \times \dots \times H_k$
of definably simple groups $H_i$. We can see that $\ol G$ is normally monogenic by induction on $k$. It suffices to consider the case $k = 2$. 

Let $x_1 \in H_1$ be a $2$-torsion element and $x_2 \in H_2$ be a $3$-torsion element.
We claim that $(x_1, x_2) \in H_1 \times H_2$ is a normal generator. Set $N = \la (x_1, x_2) \df^{\ol G}$.  

As $(x_1, x_2)^2 = (e, x_2^2) \in N$ and $x_2^2 \neq e$, $N \cap (\{e\} \times H_2)$ is a normal non-trivial definable subgroup of the definably simple $H_2$. Hence $\{e\} \times H_2 \subset N$. Similarly, $(x_1, x_2)^3 = (x_1, e) \in N$ and $N \cap (H_1 \times \{e\}) = H_1 \times \{e\} \subset N$. Therefore, $N = \ol G$.

Note that we could have deduced $(x_1, e) \in N$ by $(x_1, x_2), (e, x_2) \in N$, regardless of the fact that $x_2$ is a $3$-torsion element. However, when $k > 2$, we need the stronger assumption for the $x_i's$ to be $p_{i+1}$-torsion elements to ensure that $N = \ol G$. We know that each $H_i$ contains torsion elements of each prime order because definably simple groups are elementarily equivalent to centerless simple Lie groups \cite[Theo 5.1]{PPSIII}, which always contain a $1$-dimensional torus.

Let now $g \in G$ be any element in the pre-image of a normal generator of $\ol G$ and set $N = \la g \df ^G$. As $Z(G)N = G$ and $Z(G)$ is finite, $\dim N = \dim G$.
However, $G$ is definably connected, so $G = N$ and $G$ is normally monogenic.

\medskip
Assume $G$ is not semisimple. Then $G$ contains a maximal solvable definably connected subgroup $R$, its solvable radical, such that $G/R$ is semisimple. By the semisimple case, $G/R$ is normally monogenic. Let $g \in G$ be any element in the pre-image of a normal generator of $G/R$.

Suppose $H$ is a normal definable subgroup containing $g$. Since $RH = G$, the quotient $G/H \cong R/(R \cap H)$ is solvable. However, $G$ is perfect, so it must be $G = H$. Therefore, $G = \la g \df^G$.

\bigskip
Let's now consider the case where $G$ is not definably connected. 

\medskip
\textbf{Claim I.} There is  $x \in G$ such that $\langle x \df^G = H$ satisfies 
$G = HG^0$ and $H$ is of minimal dimension.

 \begin{proof}[Proof of Claim I]
We know that $G/G^0$ is a perfect finite group, so normally monogenic. Let $\ol x \in G/G^0$ be a normal generator. For any $x \in G$ in the pre-image of $\ol x$, every normal definable subgroup $H$ containing $x$ maps onto $G/G^0$ and $G = HG^0$. Choose $H$ of minimal dimension among lifts of all normal generators of $G/G^0$.  
\end{proof}

\medskip
The quotient $G/H = (HG^0)/H \cong G^0/(G^0 \cap H)$ is a perfect definably connected group, so normally monogenic by the connected case above. Let $\ol y \in G/H$ be a normal generator and $y \in G$ an element in the pre-image of $\ol y$. Since $G^0$ maps onto $G/H$, we can take $y \in G^0$.

Set $g = xy$ and $K = \langle g \df^G$. Recall that $x$ is any element in the pre-image of a normal generator $\ol x \in G/G^0$. We will show that $g$ is a normal generator. That is, $K = G$.

Since $y \in G^0$, the image of $g$ in $G/G^0$ is $\ol x$ that normally generates. Therefore $K$ maps onto $G/G^0$ and $KG^0 = G$. 

On the other hand, the image of $g$ in $G/H$ is $\ol y$ that normally generates too.
Therefore $K$ maps onto $G/H$ and $KH = G$. Hence
\[
G/H = (KH)/H \cong K/(K \cap H) \cong G^0/(G^0 \cap H).
\]

\bigskip
\textbf{Claim II.} $(K \cap H)K^0 = K$.

\begin{proof}[Proof of Claim II]
	Since
	$K^0(K\cap H)/(K\cap H)$ is a definable subgroup of finite index of the connected definable group
	$K/(K\cap H) \cong G/H \cong G^0/(G^0\cap H)$, we have $K^0(K\cap H)=K$.
\end{proof}

\bigskip
\textbf{Claim III.}  There is a finite subgroup $F$ of $K \cap H$ such that $F G^0 = G$. 

\begin{proof}[Proof of Claim III]
By \Cref{fact:disconnected}, there is a finite subgroup $F$ of $K \cap H$ such that $K \cap H = F(K \cap H)^0$. Since $(K \cap H)^0 \subseteq K^0$, it follows that $K = (K \cap H)K^0 = FK^0$. Moreover, $KG^0 = G$ and $K^0 \subseteq G^0$, hence $FG^0 = G$.
 \end{proof}

\noindent
Because $G/G^0$ is normally monogenic, there is some $a \in F$ such that $\langle a \df^G \cdot G^0 = G$. Set $N = \langle a \df^G $. Since $H$ is of minimal dimension,

\[
\dim H \leq \dim N \leq \dim (K \cap H),
\]

\medskip
because $a \in K \cap H$, which is a normal definable subgroup of $G$, as $K$ and $H$ are.
It follows that $\dim H = \dim (K \cap H)$ and $H^0 \subseteq K$. Since $G = KH$, it must be $\dim G = \dim K$. Therefore, $G^0 = K^0$. On the other hand, 
$G = KG^0$ so $G = K$, as we wanted.
\end{proof}

\begin{que}\label{que:normally-mon}
Are there other structures/theories, besides o-minimal ones, where every definable perfect group is normally monogenic? 
\end{que}

\medskip \noindent
 \textbf{Acknowledgements.} Thanks to Yves de Cornulier for useful references and remarks about algebraic groups.  Thanks to Nicolas Monod for introducing me to Wiegold's problem. Thanks to the referee for the many comments and useful suggestions, and for pointing out a mistake in the proof of Claim 3.7, previously stated without the connectedness assumption.

%
% 
%
%
% 
%
%
%
%\vspace{.2cm}

\end{document}